\newcounter{coco}
\theoremstyle{definition}
\theoremstyle{plain}
\newtheorem{Teo}{Theorem}
\theoremstyle{plain}
\newtheorem{Cor}[coco]{Corollary}
\theoremstyle{plain}
\newtheorem{Lemma}[coco]{Lemma}
\theoremstyle{plain}
\theoremstyle{example}
\newtheorem{Examp}{Example}
\newenvironment{proof}[1][Proof]{\textbf{#1.} }{\ \rule{0.5em}{0.5em}}
\begin{document}
\begin{frontmatter}

\title{On the Lyapunov Matrix of Linear Delay Difference Equations in
Continuous Time\thanksref{footnoteinfo}} 

\thanks[footnoteinfo]{Project CONACYT 180725}

\author[First]{Emanuel Rocha} 
\author[First]{Sabine Mondi\'e} 
\author[Third]{Michael Di Loreto}

\address[First]{Departamento de Control Autom\'atico, Cinvestav, IPN, M\'exico City (e-mail: erocha,smondie@ctrl.cinvestav.mx).}
\address[Third]{Laboratoire Amp\`{e}re, Universit\'{e} de Lyon, 
   INSA-Lyon, France, (e-mail: michael.di-loreto@insa-lyon.fr)}

\begin{abstract}                
The fundamental matrix and the delay Lyapunov matrix of linear delay difference equations are introduced. Some properties of the Lyapunov matrix, and the jump discontinuities of its derivative are proven, leading to its construction in the case of single delay or commensurate delays. An approximation is proposed for the non-commensurate case.
\end{abstract}

\begin{keyword}
Time-Delay Systems, Stability Analysis, Lyapunov Methods 
\end{keyword}

\end{frontmatter}

\section{Introduction}

In Engineering Sciences, motivations to study delay difference equations in continuous time come from sampled-data systems, neutral time-delay systems (see \cite{khar1}, \cite{opac}, and \cite{Fridman20013}), difference equations with
distributed delays (see the examples in \cite{Melchor-Aguilar2013}),
conservation laws modeled by first order hyperbolic partial differential
equations in which a transport phenomenon occurs, \cite{opac}, and other
classes of linear systems with distributed parameters, which have been shown
in \cite{Serth107} to admit a representation in the form of difference
equations in continuous time.

In the past decade (see \cite{Kharitonov200315}, \cite{khar1} and the reference therein),
the approach of Lyapunov-Krasovskii functionals with prescribed derivative,
defined by the so-called delay Lyapunov matrix, has shown its efficacy in a
wide range of applications for retarded, neutral and distributed delays
differential systems. Its extension to difference equations in
continuous time is a promising topic of investigation.

The purpose of this contribution is twofold. First, to set basic concepts and definitions of
the Lyapunov-Krasovskii approach aiming at this extension, in particular
the definition of the delay Lyapunov matrix for delay difference equations. Second, to provide a method for its construction, a step that is indeed essential in applications of the theory.

The paper is organized as follows: we introduce in Section II the class of difference equations under study and its fundamental matrix. In section III, we define the Lyapunov matrix $U(\tau )$, and we prove some of its
properties, as well as some properties of the jump discontinuities of its derivative. In section IV,
these properties allow us to present the analytic construction of the
Lyapunov matrix and of its derivative in the case of single and multiple
commensurate delays, and to propose an approximation in the non commensurate
case. Some illustrative examples are given for each case. The paper ends with
some concluding remarks.

\emph{Notations.} The transpose of a matrix $P$ is denoted by $P^T$, while
the smallest and the largest eigenvalues of a symmetric matrix $P$ are
denoted by $\lambda_{\min}(P)$ and $\lambda_{\max}(P)$, respectively. The
standard notation $P\succ 0$ ($P \succeq 0$) means that $P$ is a symmetric
positive definite matrix (semidefinite matrix). The space of piecewise right-continuous and bounded
functions defined on $[-h_m,0)$ is $\mathcal{PC}([-h_m,0),\mathbb{R}^n)$.
This space is endowed with the norm $\lVert \varphi\rVert_h= \sup_{-h_m\leq
\theta <0}\lVert \varphi(\theta)\rVert$ or with the $L_2$ norm $\lVert
\varphi\rVert_{L_2}^2=\int_{-h_m}^0\lVert \varphi(\theta)\rVert^2\mbox{d}%
\theta$, where $\lVert \varphi(\theta)\rVert$ stands for the Euclidean norm.
The solution at time $t$ of the system with initial condition $\varphi$ is
denoted by $x(t,\varphi)$, and $x_t(\varphi)=\{x(t+\theta,\varphi) \mbox{ }|%
\mbox{ } \theta \in [-h_m,0)\}$. If it is clear from the context, the
dependency with respect to $\varphi$ may be dropped. 

\section{Preliminaries}
Consider continuous-time difference equations of the form

\begin{equation}
x(t)=\sum_{j=1}^{m}A_{j}x(t-h_{j}),\quad t\geq 0  \label{eq:sys}
\end{equation}%
where $A_{1},\dots ,A_{m}$ are constant real $n\times n$ matrices, and $%
0=h_{0}<h_{1}\dots <h_{m}=H$ are the delays. For any piecewise
right-continuous and bounded initial function $\varphi \in \mathcal{PC}%
([-h_{m},0),\mathbb{R}^{n})$, there exists, for all $t\geq 0$, a unique
piecewise right-continuous and bounded solution $x(t,\varphi )$ of (\ref%
{eq:sys}), also referred to as the response of (\ref%
{eq:sys}). The Cauchy formula for the solutions of system (\ref{eq:sys}) is introduced next.

\begin{Teo}
For any initial function $\varphi \in \mathcal{PC}%
([-h_{m},0),\mathbb{R}^{n})$, the response of system (\ref%
{eq:sys}) for $t\geq 0$  is given by 
\begin{equation}  \label{eq:cau}
x(t,\varphi)=\sum_{j=1}^m\int_{-h_j}^0\frac{\mbox{{d}}}{\mbox{{d}}t%
}K(t-\theta-h_j)A_j\varphi(\theta)\mbox{\emph{d}}\theta
\end{equation}
where the $n\times n$ matrix function $K(t)$ satisfies
\begin{equation}  \label{eq:con2a}
K(t)=\sum_{j=1}^m K(t-h_j) A_j, \quad t\geq 0
\end{equation}%
\newline
with the initial condition 
\begin{equation}  \label{eq:K0}
K(\theta)=K_0:=\left(\sum_{j=1}^m A_j-I\right)^{-1}, \quad \theta\in[-h_m,0),
\end{equation}
assuming $\mbox{det}\left(I-\sum_{j=1}^m
A_j \mbox{\emph{e}}^{-\lambda h_j}\right)\neq 0$ for $\lambda=0$.
\end{Teo}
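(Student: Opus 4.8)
The plan is to prove the Cauchy formula by establishing that the matrix function $K(t)$ defined by \eqref{eq:con2a}--\eqref{eq:K0} plays the role of a fundamental matrix, and then to verify that the proposed integral expression \eqref{eq:cau} both satisfies the difference equation \eqref{eq:sys} for $t \geq 0$ and reproduces the initial function $\varphi$ on $[-h_m,0)$. The nonsingularity hypothesis on $\det\bigl(I - \sum_{j=1}^m A_j e^{-\lambda h_j}\bigr)$ at $\lambda = 0$ is exactly what makes $K_0 = \bigl(\sum_{j=1}^m A_j - I\bigr)^{-1}$ well defined, so the fundamental matrix exists and is unique.

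First I would verify that $K(t)$ is well defined and piecewise smooth: on $[-h_m,0)$ it equals the constant $K_0$, and the recursion \eqref{eq:con2a} propagates it forward over successive intervals, since each $K(t-h_j)$ with $h_j > 0$ refers to earlier times where $K$ is already known. This also shows $\tfrac{d}{dt}K$ exists away from the discontinuity points $t = k h_j$ and is itself piecewise smooth, so the derivative appearing in \eqref{eq:cau} is meaningful. I would note that $K$ inherits the same structure as the solutions of \eqref{eq:sys}, which is the sense in which it is the fundamental matrix.

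Next I would substitute the candidate \eqref{eq:cau} into the right-hand side of \eqref{eq:sys} and check the identity $x(t) = \sum_{i=1}^m A_i x(t-h_i)$. The key step is to interchange the summation over $i$ with the integral and the summation over $j$, and then invoke the defining relation \eqref{eq:con2a} in the form $\tfrac{d}{dt}K(s) = \sum_{i=1}^m \tfrac{d}{dt}K(s-h_i)A_i$ applied with $s = t-\theta-h_j$. The argument-shift bookkeeping must be done carefully, since each $A_i$ shifts the kernel argument by $h_i$, and one must check that the relation \eqref{eq:con2a} holds in the region of $s$ relevant to the integration range $\theta \in [-h_j,0)$; this is the same manipulation that confirms $K$ itself solves the equation. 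Separately I would check the initial condition by letting $t \to 0^-$ (or evaluating on $[-h_m,0)$) and using $K(\theta) = K_0$ together with $K_0^{-1} = \sum_j A_j - I$, so that the integrals collapse and return $\varphi(t)$.

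The main obstacle I expect is the interchange of differentiation and integration together with the jump discontinuities of $K'$: because $\tfrac{d}{dt}K$ has jumps at the lattice points generated by the delays, differentiating under the integral sign requires splitting the integration range at those breakpoints and accounting for the boundary contributions produced by the discontinuities. Handling these jump terms so that they either cancel or correctly reproduce the impulsive structure of the response is the delicate part; the rest of the verification is the routine, if bookkeeping-heavy, substitution described above. One could alternatively bypass the direct substitution by using uniqueness of the solution—showing both sides satisfy \eqref{eq:sys} with the same initial data forces them to coincide—which sidesteps some of the discontinuity analysis at the cost of invoking the existence-uniqueness statement already asserted for \eqref{eq:sys}.
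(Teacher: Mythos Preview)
Your direct-verification approach is genuinely different from the paper's, which never substitutes the formula back into \eqref{eq:sys}. Instead the paper starts from the tautology $\int_0^t K(t-\theta)x(\theta)\,\mathrm{d}\theta = \int_0^t K(t-\theta)x(\theta)\,\mathrm{d}\theta$, expands the left-hand side using the $K$-recursion \eqref{eq:con2a} and the right-hand side using the $x$-recursion \eqref{eq:sys}, and after cancellation obtains $\sum_j K_0 A_j \int_{t-h_j}^t x(\theta)\,\mathrm{d}\theta = \sum_j \int_{-h_j}^0 K(t-\theta-h_j)A_j\varphi(\theta)\,\mathrm{d}\theta$. Differentiating in $t$ and using $K_0\bigl(\sum_j A_j - I\bigr)=I$ together with \eqref{eq:sys} then yields \eqref{eq:cau} directly. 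Because this derivation assumes $x$ already solves \eqref{eq:sys} and produces the formula from it, no separate verification or initial-condition check is needed, and the $\tfrac{d}{dt}$ stays outside the integral, sidestepping the jump-discontinuity difficulty you flag.

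Your plan as written has a concrete gap. When you substitute \eqref{eq:cau} into $\sum_i A_i\, x(t-h_i)$, the factor $A_i$ lands on the \emph{left} of $K'$, so collapsing the sum back to $K'(t-\theta-h_j)$ requires the identity $K'(s)=\sum_i A_i K'(s-h_i)$, i.e.\ the left-multiplication recursion \eqref{eq:con2b}. The relation \eqref{eq:con2a} that you propose to invoke has the $A_i$ on the right; applying it to the kernel in $x(t)$ produces terms of the form $K'(\cdot)A_iA_j$, which do not match $A_i K'(\cdot)A_j$ unless the $A_i$ commute with $K'$. In the paper, \eqref{eq:con2b} is a corollary proved \emph{after} this theorem (by the same convolution trick), so your route would need to establish it independently first. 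A smaller point: \eqref{eq:cau} is asserted only for $t\ge 0$, so ``evaluating on $[-h_m,0)$'' is not the right consistency check; what is actually needed is that for small $t\ge 0$ the formula returns $\sum_j A_j\varphi(t-h_j)$, which amounts precisely to picking up the jump $\Delta K(0)=I$ in the distributional derivative.
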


\begin{proof}
Let us consider the following identity: 
\begin{equation}  \label{eq:id1}
\int_{0}^{t} K(t-\theta)x(\theta)\mbox{d}\theta=\int_{0}^{t}
K(t-\theta)x(\theta)\mbox{d}\theta.
\end{equation}
Using (\ref{eq:con2a}) on the left-hand side (l.h.s.) of (\ref{eq:id1}) and (\ref{eq:sys}) on the right-hand side (r.h.s.), gives after some manipulations 
\begin{equation*}
\begin{aligned}
\sum_{j=1}^m\int_{t-h_j}^{t}&K(t-\theta-h_j)A_jx(\theta)\mbox{d}\theta=\\&=\sum_{j=1}^m\int_{-h_j}^{0}K(t-%
\theta-h_j)A_jx(\theta)\mbox{d}\theta. \end{aligned}
\end{equation*}
For $\theta \in (t-h_j,t]$, the expression $t-\theta-h_j \in
[-h_j,0)$, hence $K(t-\theta-h_j)=K_0$. Moreover, $x(\theta)=\varphi(%
\theta)$ on $\theta \in [-h_j,0)$. Thus, the preceding equation can be
rewritten as 
\begin{equation}  \label{eq:alt}
\sum_{j=1}^mK_0A_j\int_{t-h_j}^{t}x(\theta)\mbox{d}\theta=\sum_{j=1}^m\int_{-h_j}^{0}K(t-%
\theta-h_j)A_j\varphi(\theta)\mbox{d}\theta.
\end{equation}
Taking the first derivative with respect
to $t$ on both sides of (\ref{eq:alt}) we obtain

\[\begin{aligned}
K_0\sum_{j=1}^m A_j&\left(x(t)-x(t-h_j)\right)=
\\&= \sum_{j=1}^m\frac{\mbox{d}}{\mbox{d}t}\int_{-h_j}^{0}K(t-%
\theta-h_j)A_j\varphi(\theta)\mbox{d}\theta.
\end{aligned}
\]

We arrive at the desired result by applying the definition of $K_0$ in (\ref{eq:K0}) on the l.h.s. of the preceding equation, and because of the fact that the solution is right-continuous. %
\end{proof}

\begin{Cor}
Matrix $K(t)$ satisfies also the equation

\begin{equation}  \label{eq:con2b}
K(t)=\sum_{j=1}^m A_jK(t-h_j), \quad t\geq 0,
\end{equation}
with the same initial condition (\ref{eq:K0}).
\end{Cor}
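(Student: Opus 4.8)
The plan is to characterise the fundamental matrix $K$ through its one--sided Laplace transform and to show that this transform is insensitive to the side on which the $A_j$ are placed; since (\ref{eq:con2a}) and (\ref{eq:con2b}) share the same initial data (\ref{eq:K0}), this will identify the two recursions. Throughout write $S=\sum_{j=1}^m A_j$ and $M(\lambda)=\sum_{j=1}^m A_j\mathrm{e}^{-\lambda h_j}$, so that by (\ref{eq:K0}) one has $K_0(S-I)=(S-I)K_0=I$.

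First I would observe that, iterating (\ref{eq:con2a}) from the constant value $K_0$ on $[-h_m,0)$, on every bounded interval $K(t)$ is a finite sum of products of the $A_j$ with $K_0$; hence $K$ is piecewise right--continuous and of exponential order, and $\widehat K(\lambda)=\int_0^\infty \mathrm{e}^{-\lambda t}K(t)\,\mathrm{d}t$ converges for $\operatorname{Re}\lambda$ large. Transforming (\ref{eq:con2a}) and splitting each delayed term as
\[
\int_0^\infty \mathrm{e}^{-\lambda t}K(t-h_j)\,\mathrm{d}t
=\frac{1-\mathrm{e}^{-\lambda h_j}}{\lambda}\,K_0+\mathrm{e}^{-\lambda h_j}\widehat K(\lambda),
\]
where the first term carries the initial data on $[-h_j,0)$, gives $\widehat K(\lambda)\,[\,I-M(\lambda)\,]=\tfrac1\lambda K_0\,[\,S-M(\lambda)\,]$. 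Using the splitting $S-M=(S-I)+(I-M)$ together with $K_0(S-I)=I$ then collapses this to the order--free closed form
\[
\widehat K(\lambda)=\frac1\lambda\Big\{[\,I-M(\lambda)\,]^{-1}+K_0\Big\}.
\]

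Finally I would transform the claimed identity (\ref{eq:con2b}) in the same way; placing the $A_j$ on the left yields $[\,I-M(\lambda)\,]\,\widehat K(\lambda)=\tfrac1\lambda[\,S-M(\lambda)\,]K_0$, and the analogous splitting, now with $(S-I)K_0=I$, reduces it to exactly the same expression for $\widehat K(\lambda)$. By injectivity of the Laplace transform on piecewise--continuous functions of exponential order, $K$ therefore satisfies (\ref{eq:con2b}), the initial condition being unchanged because (\ref{eq:K0}) is imposed identically. Equivalently, one may note that $t\mapsto K_T(t)^{T}$, with $K_T$ the fundamental matrix of the transposed system $y(t)=\sum_j A_j^{T}y(t-h_j)$, solves (\ref{eq:con2b}), and that $K_T^{T}=K$ by the same transform computation. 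The two points requiring care are the bookkeeping of the nonzero initial data $K_0$ on $[-h_m,0)$, which is what produces the boundary factors $(1-\mathrm{e}^{-\lambda h_j})/\lambda$, and the noncommutative algebra showing that both one--sided expressions reduce to the single order--free form above; this reduction is exactly where the defining relation $K_0(S-I)=(S-I)K_0=I$ is used.
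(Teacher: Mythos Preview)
Your proof is correct and takes a genuinely different route from the paper. The paper works entirely in the time domain: it introduces an auxiliary matrix $Q(t)$ solving (\ref{eq:con2b}) with the same initial data (\ref{eq:K0}), writes the trivial identity $\int_0^t K(t-\theta)Q(\theta)\,\mathrm{d}\theta=\int_0^t K(t-s)Q(s)\,\mathrm{d}s$, substitutes (\ref{eq:con2a}) on one side and the dynamics of $Q$ on the other, shifts the integration variable, exploits the constant initial value $K_0$ on the boundary intervals, and differentiates in $t$ to conclude $K=Q$. You instead compute $\widehat K(\lambda)$ directly and observe that the closed form $\widehat K(\lambda)=\lambda^{-1}\bigl[(I-M(\lambda))^{-1}+K_0\bigr]$ is insensitive to whether the $A_j$ multiply on the left or on the right, the key commutative input being $K_0(S-I)=(S-I)K_0=I$. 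This is a clean algebraic shortcut; in fact the paper derives precisely this expression for $\widehat K$ later, in the proof of the lemma computing the antisymmetric matrix $P$, so your argument effectively front-loads that computation. The paper's convolution argument has the virtue of being self-contained and avoiding any transform machinery, while yours is shorter, gives the explicit transform as a byproduct, and makes transparent \emph{why} the two recursions coincide. Your closing remark via the transposed system $y(t)=\sum_j A_j^T y(t-h_j)$ is also a valid and perhaps the most conceptual way to see the result.
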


\begin{proof}
We introduce the matrix $Q(t)$, unique solution
of 
\begin{equation}  \label{eq:eqpa}
Q(t)=\sum_{j=1}^mA_jQ(t-h_j), \quad t\geq 0
\end{equation}
\begin{equation*}
Q(\theta)=K_0, \quad \theta\in[-h_m,0).
\end{equation*}
Consider the following identity: 
\begin{equation}  \label{eq:iden1}
\int_{0}^{t}K(t-\theta)Q(\theta)\mbox{d}\theta=\int_{0}^{t}K(t-s)Q(s)%
\mbox{d}s.
\end{equation}

Replacing $Q(\theta)$ on the l.h.s. of (\ref{eq:iden1}) with (\ref{eq:eqpa}%
), and $K(t-s)$ on the r.h.s. with (\ref{eq:con2a}), we
have 
\begin{equation*}
\begin{aligned}
\sum_{j=1}^m\int_{0}^{t}K(t-\theta)A_j&Q(\theta-h_j)\mbox{d}\theta=\\&=\sum_{j=1}^m\int_{0}^{t}K(t-s-h_j)A_jQ(s)%
\mbox{d}s.\end{aligned}
\end{equation*}

The change of variable $s=\theta-h_j$ on the l.h.s. and algebraic manipulations yield

\begin{equation*}
\begin{aligned}
\sum_{j=1}^m\int_{-h_j}^{0}&K(t-s-h_j)A_jQ(s)\mbox{d}s=\\&=\sum_{j=1}^m\int_{t-h_j}^{t}K(t-s-h_j)A_jQ(s)\mbox{d}s. \end{aligned}
\end{equation*}

Notice that for $s\in (t-h_j,t]$, $K(t-s-h_j)=K_0$, and for $s \in [-h_j,0)$, $%
Q(s)=K_0$. Then, we arrive at 

\begin{equation*}
\sum_{j=1}\int_{t-h_j}^{t} K(\theta)\mbox{d}\theta A_jK_0=K_0\sum_{j=1}^m A_j\int_{t-h_j}^{t} Q(s)\mbox{d}s.
\end{equation*}

Taking the time derivative on both sides, we obtain
\[\begin{aligned}
\sum_{j=1}^m& \left(K(t)-K(t-h_j)\right)A_jK_0=\\&=K_0\sum_{j=1}^mA_j\left(Q(t)-Q(t-h_j)\right)
\end{aligned}
\]

The dynamic equations of $K(t)$ and $Q(t)$, and the definition of $K_0$ imply (\ref{eq:con2b}).
\end{proof}

Matrix $K(t)$ is known as the \emph{fundamental matrix} of system (\ref%
{eq:sys}) in reference to the fundamental matrix defined in \cite{Cooke1968}.




Equation (\ref{eq:con2b}) implies that each column of $K(t)$ is solution of (\ref{eq:sys}), therefore if the system is
exponentially stable, i.e. there exist $\sigma >0$ and $\gamma \geq 0$ such that

\[\lVert x(t,\varphi)\rVert \leq \gamma \lVert \varphi\rVert_h \mbox{e}^{-\sigma t},\]

the matrix $K(t)$ also satisfies the inequality%

\begin{equation}
\lVert K(t)\rVert \leq \gamma \lVert K_{0}\rVert \mbox{e}^{-\sigma
t},\quad t\geq 0.  \label{eq:222}
\end{equation}

\section{Lyapunov matrix and its properties}

In this section, we introduce the definition for the Lyapunov delay matrix.
We show that it is well defined, and we prove some of
it properties.

\begin{Lemma}
Let (\ref{eq:sys}) be exponentially stable, then for every $n\times n$
symmetric, positive definite matrix $W$, the matrix

\begin{equation}  \label{eq:U}
U(\tau)=\int_{0}^{\infty}\left(K(t)-K_0\right)^TWK(t+\tau)\mbox{\emph{d}}t
\end{equation}
is well defined for all $\tau \geq -H$.
\end{Lemma}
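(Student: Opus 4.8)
The plan is to prove that the integral in (\ref{eq:U}) converges absolutely, from which well-definedness follows immediately. The only analytic ingredient required is the exponential estimate (\ref{eq:222}), together with the boundedness of $K(t)-K_0$.

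First I would estimate the integrand pointwise. By submultiplicativity of the induced matrix norm,
\[
\lVert (K(t)-K_0)^T W K(t+\tau)\rVert \leq \lVert K(t)-K_0\rVert\,\lVert W\rVert\,\lVert K(t+\tau)\rVert .
\]
Since (\ref{eq:222}) gives $\lVert K(t)\rVert \leq \gamma\lVert K_0\rVert \mbox{e}^{-\sigma t}$ for $t\geq 0$, the first factor is uniformly bounded: $\lVert K(t)-K_0\rVert \leq \lVert K(t)\rVert + \lVert K_0\rVert \leq (\gamma+1)\lVert K_0\rVert =: M$ for all $t\geq 0$. Note that this factor does \emph{not} decay (it tends to $\lVert K_0\rVert$), so the decay that makes the integral converge must come entirely from the factor $\lVert K(t+\tau)\rVert$.

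Next I would control $\lVert K(t+\tau)\rVert$ by distinguishing whether the shifted argument is nonnegative. The hypothesis $\tau\geq -H$ guarantees $t+\tau\geq -H=-h_m$ for every $t\geq 0$, so $K(t+\tau)$ is always well defined. I split $[0,\infty)$ at the point $t=\max\{0,-\tau\}$. On the finite interval $t\in[0,-\tau)$ (nonempty only when $\tau<0$) we have $t+\tau\in[-H,0)$, hence $K(t+\tau)=K_0$ by (\ref{eq:K0}) and the integrand is bounded by $M\lVert W\rVert\lVert K_0\rVert$ on a set of finite measure. On the tail $t\geq \max\{0,-\tau\}$ we have $t+\tau\geq 0$, so (\ref{eq:222}) yields $\lVert K(t+\tau)\rVert\leq \gamma\lVert K_0\rVert \mbox{e}^{-\sigma(t+\tau)}$, and the integrand is dominated by $M\lVert W\rVert\gamma\lVert K_0\rVert \mbox{e}^{-\sigma\tau}\mbox{e}^{-\sigma t}$, which is integrable over $[0,\infty)$.

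Combining the two pieces shows that $\int_0^\infty \lVert (K(t)-K_0)^T W K(t+\tau)\rVert\,\mbox{d}t<\infty$, so the matrix integral (\ref{eq:U}) converges entrywise and absolutely, and $U(\tau)$ is well defined for every $\tau\geq -H$; in fact the same estimate produces an explicit bound of the form $\lVert U(\tau)\rVert \leq C\,(\lvert\tau\rvert + 1/\sigma)$ with a constant $C$ depending only on $\gamma$, $W$ and $K_0$. I do not expect any genuine obstacle here: the single point requiring care is the bookkeeping of the shift $\tau$, ensuring that for negative $\tau$ down to $-H$ the argument $t+\tau$ always lands either in the region where $K=K_0$ or in the region where the exponential bound (\ref{eq:222}) applies.
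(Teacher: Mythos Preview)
Your argument is correct and follows essentially the same route as the paper's: bound the integrand using the exponential estimate (\ref{eq:222}) and conclude absolute convergence. The only difference is bookkeeping: the paper splits the integrand as $K(t)^TWK(t+\tau)-K_0^TWK(t+\tau)$ and bounds it by $\gamma\lVert K_0^TWK_0\rVert\bigl(\gamma e^{-\sigma(2t+\tau)}+e^{-\sigma(t+\tau)}\bigr)$, whereas you bound $\lVert K(t)-K_0\rVert$ by a constant and take all the decay from $\lVert K(t+\tau)\rVert$, handling the region $t+\tau<0$ explicitly; your treatment of that region is in fact more careful than the paper's.
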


\begin{proof}
It follows directly from (\ref{eq:222}) that, for $t
\geq 0$ 

\begin{multline*}
\left\lVert \left(K^T(t)-K_0^T\right)WK(t+\tau)\right\rVert=\\=\left\lVert
\left(K(t)-K_0\right)^TWK(t+\tau)\right\rVert\\ \leq
\gamma\lVert K_0^TWK_0\rVert \left(\gamma\mbox{e}^{-\sigma(2t+\tau)}+
\mbox{e}^{-\sigma (t+\tau)}\right).\end{multline*}

Now, let $\tau \in [\tau_0, \infty)$; then, the inequality 
\begin{equation*}
\begin{aligned}
\int_{0}^{\infty}\lVert (K^T(t)-K_0^T)WK(t+\tau)\rVert\mbox{d}t\leq &\frac{%
\gamma^2}{2\sigma}\lVert K_0^TWK_0\rVert \mbox{e}^{-\sigma \tau_0}\\&\times\left(1+%
\frac{2}{\gamma}\right),
\end{aligned}
\end{equation*}
proves the statement.
\end{proof}

In analogy with the delay-free case, and other cases of delay systems
reported in the literature, the real valued matrix function $U(\cdot)$ is called the
Lyapunov matrix of (\ref{eq:sys}).

We now present some useful properties of $U(\tau)$.\\

\begin{Lemma}
Let system (\ref{eq:sys}) be exponentially stable. We define the $n \times n$
antisymmetric matrix 
\begin{equation}  \label{eq:P}
P \triangleq \int_{0}^{\infty}K^T(\tau)WK_0\mbox{\emph{d}}%
\tau-\int_{0}^{\infty}K_0^TWK(\tau)\mbox{\emph{d}}\tau
\end{equation}
with $K_0$ defined in (\ref{eq:K0}). Then, the Lyapunov matrix (\ref{eq:U})
associated to the symmetric positive definite matrix $W$ satisfies the Symmetry property: 

\begin{equation}  \label{eq:symU1}
\begin{aligned} U(-\tau)=U^T(\tau)+P-\tau K_0^TWK_0, \quad \tau \in [-H,H]
\end{aligned}
\end{equation}

and the Dynamic property: 

\begin{equation}  \label{eq:DYN11}
U(\tau)= \sum_{j=1}^m U(\tau-h_j)A_j, \quad \tau \geq 0.
\end{equation}%
\end{Lemma}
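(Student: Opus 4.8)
The plan is to establish both identities by direct manipulation of the defining integral \eqref{eq:U}, $U(\tau)=\int_{0}^{\infty}(K(t)-K_0)^T W K(t+\tau)\,\mbox{d}t$, exploiting the functional equations satisfied by the fundamental matrix $K$ together with the fact that $K\equiv K_0$ on $[-h_m,0)$ by \eqref{eq:K0}. The two properties are of very different difficulty: the Dynamic property is essentially a one-line consequence of the equation for $K$, whereas the Symmetry property requires a careful change of variables.

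For the Dynamic property I would substitute the relation $K(s)=\sum_{j=1}^m K(s-h_j)A_j$ from \eqref{eq:con2a}, valid for $s\geq 0$, into the factor $K(t+\tau)$. Since $t\geq 0$ and $\tau\geq 0$ force $t+\tau\geq 0$, this substitution is legitimate for every $t$ in the domain of integration, giving $K(t+\tau)=\sum_{j=1}^m K(t+\tau-h_j)A_j$. Because the constant matrices $A_j$ do not depend on $t$, they factor out to the right of each integral, and the remaining integral $\int_{0}^{\infty}(K(t)-K_0)^T W K(t+(\tau-h_j))\,\mbox{d}t$ is exactly $U(\tau-h_j)$, which is well defined since $\tau-h_j\geq -H$. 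This yields $U(\tau)=\sum_{j=1}^m U(\tau-h_j)A_j$ at once.

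For the Symmetry property I would first transpose the definition, using $W=W^T$, to obtain $U^T(\tau)=\int_{0}^{\infty}K^T(t+\tau)W\,(K(t)-K_0)\,\mbox{d}t$, and then work on $U(-\tau)=\int_{0}^{\infty}(K(t)-K_0)^T W K(t-\tau)\,\mbox{d}t$ for $\tau\in[0,H]$. The key move is the change of variable $s=t-\tau$, after which the integral runs over $[-\tau,\infty)$; splitting off the subinterval $[-\tau,0)$ and using $K(s)=K_0$ there (this is precisely where $\tau\leq H=h_m$ is needed) produces three contributions: a copy of $U^T(\tau)$, the explicit term $-\tau K_0^T W K_0$ arising from $\int_{-\tau}^{0}K_0^T W K_0\,\mbox{d}s$, and several boundary integrals of the form $\int_{0}^{\tau}K^T(\cdot)W K_0$. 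The decisive bookkeeping step is to verify that these $\int_{0}^{\tau}$ boundary terms cancel, leaving exactly $\int_{0}^{\infty}K^T(r)W K_0\,\mbox{d}r-\int_{0}^{\infty}K_0^T W K(s)\,\mbox{d}s$, which is $P$ by \eqref{eq:P}. Collecting terms gives $U(-\tau)=U^T(\tau)+P-\tau K_0^T W K_0$ on $[0,H]$.

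Finally, to extend the identity to $\tau\in[-H,0)$ I would transpose the relation just obtained and use that $P$ is antisymmetric, $P^T=-P$, while $K_0^T W K_0$ is symmetric; this reproduces the same formula with $\tau$ replaced by $-\tau$, so the identity holds on all of $[-H,H]$. I expect the genuine obstacle to be the symmetry property's change-of-variable bookkeeping: correctly tracking the shifted integration limits, invoking $K\equiv K_0$ on $[-\tau,0)\subset[-H,0)$ at exactly the right places, and confirming that the $\int_{0}^{\tau}$ boundary terms recombine to the constant matrix $P$ while everything else reduces to $U^T(\tau)$ and the linear term. The Dynamic property should be routine by comparison.
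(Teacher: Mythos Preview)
Your proposal is correct and follows essentially the same route as the paper: the Dynamic property is obtained by substituting \eqref{eq:con2a} into the factor $K(t+\tau)$ of \eqref{eq:U}, and the Symmetry property via the change of variable $s=t-\tau$ in $U(-\tau)$, using $K\equiv K_0$ on $[-\tau,0)\subset[-h_m,0)$, with the extension to $\tau\in[-H,0)$ by transposition and the antisymmetry of $P$. One small correction to your bookkeeping: the boundary contribution $\int_{0}^{\tau}K^T(r)WK_0\,\mbox{d}r$ does not cancel but rather combines with the remaining $\int_{\tau}^{\infty}K^T(r)WK_0\,\mbox{d}r$ (coming from the $[0,\infty)$ piece after comparison with $U^T(\tau)$) to form $\int_{0}^{\infty}K^T(r)WK_0\,\mbox{d}r$, which is exactly the first half of $P$; with this adjustment the argument goes through verbatim.
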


\begin{proof}
\textit{Symmetry property.}
Using the change of variable $\xi=t-\tau$ 
into (\ref{eq:U}) yields 
\begin{equation*}
\begin{aligned}
U(-\tau)&=&\int_{-\tau}^{\infty}\left(K^T(\xi+\tau)-K_0^T\right)WK(\xi)\mbox{d}\xi\\
&=&\int_{-\tau}^{\infty}K^T(\xi+\tau)W\left(K(\xi)-K_0\right)%
\mbox{d}\xi\\
&&-\int_{-\tau}^{\infty}K_0^TWK(\xi)\mbox{d}\xi+\int_{0}^{\infty}K^T(\xi)WK_0%
\mbox{d}\xi\\
&=&U^T(\tau)-\int_{-\tau}^{0}K_0^TWK(\xi)\mbox{d}\xi\\&&+%
\int_{0}^{\infty}K^T(\xi)WK_0\mbox{d}\xi-\int_{0}^{\infty} K_0^TWK(\xi)\mbox{d}\xi. \end{aligned}
\end{equation*}

Consider the case $\tau \geq 0$. Since the matrix $K(\xi)=K_0$, for $\xi\in
[-\tau,0)$, using the definition of matrix $P$ in (\ref{eq:P}) %
%
we arrive at equation (%
\ref{eq:symU1}) for $\tau \in [0,H)$.

Consider now the case $\tau \in [-H,0)$, for which the equality 
\begin{equation*}
U(\tau)=U^T(-\tau)+P+\int_{-0}^{\tau-0}K_0^TWK(\xi)\mbox{d}\xi,
\end{equation*}
is satisfied, and (\ref%
{eq:symU1}) follows by transposition.\newline

\textit{Dynamic property.} Using (\ref{eq:con2a}) into (\ref{eq:U}) gives 
\begin{equation*}
\begin{aligned} U(\tau)&=\int_{0}^{\infty}\left(K^T(t)-K_0\right)W\left(\sum_{j=1}^mK(t+\tau-h_j)A_j\right)%
 \mbox{d}t\\
 &=\sum_{j=1}^m\int_{0}^{\infty}\left(K^T(t)-K_0\right)WK(t+\tau-h_j)\mbox{d}tA_j.\\
\end{aligned}
\end{equation*}

Using the definition of $U(\tau)$, we arrive
at (\ref{eq:DYN11}).
\end{proof}\\


\begin{Lemma}
The matrix $P$ defined in (\ref{eq:P}) satisfies the equation 
\begin{equation}  \label{eq:defP}
P=K_0^T\left[\sum_{j=1}^mh_j\left(WK_0A_j-A_j^TK_0^TW\right)\right]K_0,
\end{equation}
\end{Lemma}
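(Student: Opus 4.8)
The plan is to reduce the whole expression to the single integral $\int_{0}^{\infty}K(\tau)\,\mbox{d}\tau$, to evaluate it in closed form by exploiting the functional equation satisfied by $K$, and then to substitute the result back into (\ref{eq:P}). First I would note that, since $W=W^T$, the second term in (\ref{eq:P}) is precisely the transpose of the first, so that with $M:=\int_{0}^{\infty}K(\tau)\,\mbox{d}\tau$ one has
\[
P = M^{T}WK_0 - K_0^{T}WM .
\]
The integral $M$ converges absolutely: the exponential bound (\ref{eq:222}) gives $\int_{0}^{\infty}\lVert K(\tau)\rVert\,\mbox{d}\tau \leq \gamma\lVert K_0\rVert/\sigma<\infty$, so $M$ is a well-defined constant matrix.

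Next I would compute $M$ by integrating the dynamic equation (\ref{eq:con2b}) over $[0,\infty)$, which yields $M=\sum_{j=1}^{m}A_j\int_{0}^{\infty}K(\tau-h_j)\,\mbox{d}\tau$. Performing the change of variable $s=\tau-h_j$ in each term and splitting $\int_{-h_j}^{\infty}=\int_{-h_j}^{0}+\int_{0}^{\infty}$, I would invoke the initial condition (\ref{eq:K0}), namely $K(s)=K_0$ for $s\in[-h_j,0)$, to get $\int_{-h_j}^{0}K(s)\,\mbox{d}s = h_j K_0$, while the remaining piece reproduces $M$. Writing $A:=\sum_{j=1}^{m}A_j$, this produces the linear equation
\[
(I-A)\,M = \sum_{j=1}^{m}h_j A_j K_0 .
\]

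Since $K_0=(A-I)^{-1}$ by (\ref{eq:K0}), we have $(I-A)^{-1}=-K_0$, hence $M=-K_0\sum_{j=1}^{m}h_j A_j K_0$. Substituting $M$ and $M^{T}=-K_0^{T}\sum_{j=1}^{m}h_j A_j^{T}K_0^{T}$ into $P=M^{T}WK_0-K_0^{T}WM$ and factoring $K_0^{T}$ on the left and $K_0$ on the right of each resulting term gives exactly
\[
P = K_0^{T}\left[\sum_{j=1}^{m}h_j\left(WK_0A_j-A_j^{T}K_0^{T}W\right)\right]K_0 ,
\]
which is (\ref{eq:defP}); this factorization also makes the antisymmetry of $P$ transparent.

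I expect the only delicate point to be the bookkeeping in the change of variable, specifically the correct extraction of the boundary contribution $\int_{-h_j}^{0}K(s)\,\mbox{d}s = h_j K_0$ and the identification $(I-A)^{-1}=-K_0$; once the closed form for $M$ is in hand, the remaining steps are routine matrix manipulations. As an equivalent alternative one could integrate (\ref{eq:con2a}) instead of (\ref{eq:con2b}), obtaining $M(I-A)=\sum_{j=1}^{m}h_j K_0 A_j$ and hence the same $M$, so the choice of functional equation is immaterial.
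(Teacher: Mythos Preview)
Your proof is correct and genuinely different from the paper's. The paper proceeds via Laplace transforms: it computes $\hat{K}(s)$ explicitly from the functional equation (\ref{eq:con2a}), introduces an auxiliary $R(t)=-\int_0^t K_0^T W K(\tau)\,\mbox{d}\tau$, invokes the final value theorem to write $P=\lim_{s\to 0}\{s\hat{R}(s)-s\hat{R}^T(s)\}$, and then extracts the limit through a first-order series expansion of $(\sum_j A_j e^{-sh_j}-I)^{-1}$ around $s=0$. Your route is purely time-domain and more elementary: you recognize that $P=M^TWK_0-K_0^TWM$ with $M=\int_0^\infty K(\tau)\,\mbox{d}\tau$, and you obtain $M$ in closed form by integrating the functional equation directly, the boundary contribution $h_jK_0$ arising from the initial condition. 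This avoids transforms, the final value theorem, and the series expansion altogether, and makes the structure of the answer (and the antisymmetry of $P$) transparent from the start. The paper's approach, on the other hand, fits naturally within a transfer-function framework and would generalize more readily if one wanted higher-order information about $\hat{K}(s)$ near $s=0$.
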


\begin{proof}
We begin by taking the Laplace transform of equation (\ref{eq:con2a}),
\begin{equation*}
\begin{aligned}
\hat{K}(s)&=\sum_{j=1}^m\int_{0}^{\infty}K(t-h_j)\mbox{e}^{-st}%
\mbox{d}tA_j
\\&=\sum_{j=1}^m\hat{K}(s)\mbox{e}^{-sh_j}A_j+\sum_{j=1}^m%
\int_{-h_j}^{0}K(t)\mbox{e}^{-s(t+h_j)}\mbox{d}tA_j\\
&=\sum_{j=1}^m\hat{K}(s)\mbox{e}^{-sh_j}A_j+\frac{1}{s}K_0\sum_{j=1}^m%
\left(1-\mbox{e}^{-sh_j}\right)A_j, \end{aligned}
\end{equation*}
which is equivalent to 
\begin{equation}  \label{eq:es1}
\begin{aligned}
\hat{K}(s)&=\frac{1}{s}K_0\sum_{j=1}^m\left(1-\mbox{e}^{-sh_j}%
\right)A_j\left(I-\sum_{j=1}^mA_j\mbox{e}^{-sh_j}\right)^{-1}. \end{aligned}
\end{equation}
Note that 
\begin{equation*}
\begin{aligned} K_0\sum_{j=1}^mA_j&=K_0\left(\sum_{j=1}^mA_j-I+I\right)=I+K_0. \end{aligned}
\end{equation*}
Using the definition of $K_0$ in (\ref{eq:K0}), we rewrite
(\ref{eq:es1}) as 
\begin{equation*}
\begin{aligned} \hat{K}(s)&=\frac{1}{s}
\left(I+K_0(I-\sum_{j=1}^m\mbox{e}^{-sh_j}A_j)\right)(I-%
\sum_{j=1}^mA_j\mbox{e}^{-sh_j})^{-1}, \end{aligned}
\end{equation*}
which yields 
\begin{equation*}
\begin{aligned}
\hat{K}(s)&=\frac{1}{s}\left(K_0-\left(\sum_{j=1}^mA_j%
\mbox{e}^{-sh_j}-I\right)^{-1}\right). \end{aligned}
\end{equation*}
Now, we define the matrix function 
\begin{equation*}
R(t)=-\int_{0}^tK_0^TWK(\tau)\mbox{d}\tau,
\end{equation*}
which has the following Laplace Transform

\begin{equation*}
\hat{R}(s)=\frac{1}{s^2}K_0^TW\left(\left(\sum_{j=1}^mA_j\mbox{e}%
^{-sh_j}-I\right)^{-1}-K_0\right).
\end{equation*}
From the definition of $P$ in (\ref{eq:P}), and the final value theorem, we
find that 
\begin{equation*}
P=\lim_{s\rightarrow 0} \left\{s\hat{R}(s)-s\hat{R}^T(s)\right\}.
\end{equation*}
It can be readily verified that 
\begin{equation*}
\begin{aligned} P=&\lim_{s\rightarrow 0}
\frac{1}{s}K_0^TW\left(\sum_{j=1}^mA_j\mbox{e}^{-sh_j}-I\right)^{-1}\\
&-\lim_{s\rightarrow 0}
\frac{1}{s}\left(\sum_{j=1}^mA_j^T\mbox{e}^{-sh_j}-I\right)^{-1}WK_0.
\end{aligned}
\end{equation*}
The series expansion of the term $\left(\sum_{j=1}^mA_j\mbox{e}%
^{-sh_j}-I\right)^{-1}$ allows to conclude that 
\begin{equation*}
\begin{aligned} \lim_{s \rightarrow
0}\frac{1}{s}&\left(\sum_{j=1}^mA_j\mbox{e}^{-sh_j}-I\right)^{-1}=\\&=\lim_{s
\rightarrow 0}\frac{1}{s}\left(I+sK_0\sum_{j=1}^mh_jA_j\right)K_0,
\end{aligned}
\end{equation*}
hence, 
\begin{equation*}
\begin{aligned} P=&\lim_{s\rightarrow 0}
\frac{1}{s}K_0^TW\left(I+sK_0\sum_{j=1}^mh_jA_j\right)K_0\\
&-\lim_{s\rightarrow 0}
\frac{1}{s}K_0^T\left(I+sK_0\sum_{j=1}^mh_jA_j\right)^TWK_0, \end{aligned}
\end{equation*}
and equation (\ref{eq:defP}) follows.
\end{proof}

Next, we introduce the matrix function $\Delta U'(\tau)$ describing the jump discontinuities of the derivative of the Lyapunov matrix. This matrix is defined as

\[ \Delta U'(\tau)\triangleq U'(\tau+0)-U'(\tau-0),
\]
and we show its following properties.\\

\begin{Lemma}
\label{le:delu} Let system (\ref{eq:sys}) be exponentially stable. The jump discontinuities of the derivative of the Lyapunov matrix (\ref{eq:U}),
associated to a positive definite matrix $W$ satisfy the Symmetry property:
\begin{equation}  \label{eq:symU}
\begin{aligned} \Delta U'(-\tau)=[\Delta U'(\tau)]^T, \end{aligned}
\end{equation}

the Dynamic property:
\begin{align}  \label{eq:DYN1}
\Delta U^{\prime }(\tau)=%
\begin{cases}
\sum_{j=1}^m \Delta U^{\prime }(\tau-h_j)A_j, & \tau > 0, \\ 
\sum_{j=1}^mA_j^T \Delta U^{\prime }(\tau+h_j), & \tau < 0,%
\end{cases}%
\end{align}
and the Generalized algebraic property:
\begin{equation}  \label{eq:alg1}
\begin{aligned} \sum_{i=1}^m\sum_{j=1}^mA_i^T\Delta
U'(\tau+h_i-h_j)A_j-\Delta U'(\tau)=& W\Delta K(\tau),\\ & \tau \geq 0
\end{aligned}
\end{equation}
\end{Lemma}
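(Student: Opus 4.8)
The plan is to derive the three properties of $\Delta U'$ from the already established properties of $U$ whenever possible, reopening the definition (\ref{eq:U}) only for the genuinely new content of the algebraic property. Two facts will be used repeatedly: first, $U$ is continuous and piecewise $C^1$, so $\Delta U'(\tau)=U'(\tau+0)-U'(\tau-0)$ is well defined and its support, like that of the jumps of $K$, lies in the set of nonnegative integer combinations of the delays; second, the jump of $K$ at the origin is $\Delta K(0)=K(0+0)-K(0-0)=\big(\sum_{j=1}^m A_j\big)K_0-K_0=I$, since $K_0\big(\sum_j A_j-I\big)=I$.

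For the symmetry property I would differentiate (\ref{eq:symU1}) in $\tau$: the terms $P$ and $-\tau K_0^{T}WK_0$ are affine in $\tau$ and contribute no jump to the derivative, so passing to jumps in the identity $-U'(-\tau)=[U'(\tau)]^{T}-K_0^{T}WK_0$ gives (\ref{eq:symU}) at once, since the jump of $\tau\mapsto -U'(-\tau)$ equals $\Delta U'(-\tau)$ while the constant $K_0^{T}WK_0$ contributes none. For the dynamic property with $\tau>0$ I would differentiate (\ref{eq:DYN11}) and again pass to jumps, obtaining $\Delta U'(\tau)=\sum_{j}\Delta U'(\tau-h_j)A_j$. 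The case $\tau<0$ then follows purely formally: writing $\Delta U'(\tau)=[\Delta U'(-\tau)]^{T}$ by symmetry, expanding $\Delta U'(-\tau)$ with $-\tau>0$ by the rule just obtained, transposing term by term, and re-applying symmetry to each summand yields $\Delta U'(\tau)=\sum_{j}A_j^{T}\Delta U'(\tau+h_j)$, which is (\ref{eq:DYN1}).

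The generalized algebraic property is the main obstacle, and the only place where (\ref{eq:U}) must be reopened. I would first observe that the double sum collapses: applying the $\tau>0$ dynamic rule at the argument $\tau+h_i>0$ gives $\Delta U'(\tau+h_i)=\sum_{j}\Delta U'(\tau+h_i-h_j)A_j$, whence $\sum_i\sum_j A_i^{T}\Delta U'(\tau+h_i-h_j)A_j=\sum_i A_i^{T}\Delta U'(\tau+h_i)$. Thus (\ref{eq:alg1}) is equivalent to the single identity $\sum_i A_i^{T}\Delta U'(\tau+h_i)-\Delta U'(\tau)=W\Delta K(\tau)$ for $\tau\ge0$, a ``left'' dynamic rule carrying a source term. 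To produce that source I would substitute the transpose of (\ref{eq:con2a}), namely $K^{T}(t)=\sum_i A_i^{T}K^{T}(t-h_i)$ for $t\ge0$, into the factor $\big(K(t)-K_0\big)^{T}$ of $\sum_i A_i^{T}U(\tau+h_i)$ (after the change of variable $s=t+h_i$) and compare with $U(\tau)$. Every interior contribution cancels through the very jump relation $\Delta K(\tau)=\sum_j\Delta K(\tau-h_j)A_j$ that underlies the $\tau>0$ dynamic property; this relation fails only at $t=0$, and fails there exactly by $\Delta K(0)=I$, so that upon differentiating and passing to jumps the boundary mismatch becomes $I^{T}W\Delta K(\tau)=W\Delta K(\tau)$.

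The hard part will be the rigorous bookkeeping of jumps when differentiating the integral (\ref{eq:U}). Using $\partial_\tau K(t+\tau)=\partial_t K(t+\tau)$, one must track the Dirac contributions of $K$ at the lattice points, the interaction of the jump of $K(t+\tau)$ with the discontinuous coefficient $K(t)-K_0$, and isolate the single boundary term at $t=0$. I would make this precise by computing $U'$ on each open interval between consecutive lattice points and taking one-sided limits, rather than manipulating distributions formally; this route yields the closed form $\Delta U'(\tau)=-\sum_{\ell\ge0}\Delta K^{T}(\ell)\,W\,\Delta K(\tau+\ell)$, the sum running over lattice points, from which the equivalent identity above—and hence (\ref{eq:alg1})—follows by the same con2a jump relation together with $\Delta K(0)=I$.
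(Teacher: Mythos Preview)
Your proposal is correct and, for the symmetry and dynamic properties, more economical than the paper's proof. The paper first derives the explicit representation $U'(\tau)=\sum_{\kappa\ge0}\big(K^{T}(t_\kappa-\tau)-K_0^{T}\big)W\,\Delta K(t_\kappa)$ and then the closed form $\Delta U'(\tau)=-\sum_{q\ge0}\Delta K^{T}(t_q)\,W\,\Delta K(t_q+\tau)$, and reads off \emph{all three} properties from these formulas; you instead obtain (\ref{eq:symU}) and (\ref{eq:DYN1}) by differentiating the $U$-level identities (\ref{eq:symU1}) and (\ref{eq:DYN11}) directly and passing to jumps. This is cleaner, though it tacitly presupposes that $U$ is piecewise $C^{1}$---a regularity fact the paper establishes precisely by deriving the explicit formula, so in your write-up you would still need that formula (or an equivalent argument) up front. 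For the algebraic property the two routes ultimately coincide: both rest on the closed form for $\Delta U'$ together with $\Delta K(0)=I$ and the $\Delta K$ dynamics, splitting off the boundary term at the origin. Your preliminary collapse of the double sum, $\sum_{i}\sum_{j}A_i^{T}\Delta U'(\tau+h_i-h_j)A_j=\sum_{i}A_i^{T}\Delta U'(\tau+h_i)$ via the already-proved $\tau>0$ dynamic rule, is a neat simplification the paper does not use. One small slip: the jump support of $\Delta U'$ consists of \emph{signed} integer combinations of the delays (differences $t_\kappa-t_q$ of lattice points), not only nonnegative ones.
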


\begin{proof}
 \textit{Symmetry property.} From the definition of the Lyapunov matrix (\ref{eq:U}) we have, 
\begin{equation*}
\begin{aligned}
U'(\tau)&=\int_{0}^{\infty}\left(K(t)-K_0\right)^TWK'(t+\tau)\mbox{d}t,\qquad \tau \in
\mathbb{R}, \end{aligned}
\end{equation*}
which is a right-continuous function.

The fundamental matrix $K(t)$ is a constant function except at discontinuity
points depending on the delays $h_j$, $j=1,\dots,m$. Defining the set of
discontinuity instants of $K(t)$ as $\mathscr{I_K}=\{t_{\kappa}\}_{\kappa
\in \mathbb{N}}$, where 
\begin{equation}  \label{eq:tk}
t_{\kappa}\triangleq\min_{p_{\kappa}^1,\dots,p_{\kappa}^m}\left\{%
\sum_{j=1}^mp_{\kappa}^jh_j \mid t_{\kappa}>t_{{\kappa}-1}, \quad
p_{\kappa}^j \in \mathbb{N}\right\},
\end{equation}
we can write $U^{\prime }(\tau)$ as 
\begin{equation*}
\begin{aligned} U'(\tau)&=\sum_{\kappa\geq
0}\int_{t_{\kappa}-\tau-0}^{t_{\kappa}-\tau+0}\left(K^T(t)-K_0^T\right)WK'(t+\tau)\mbox{d}t,
\end{aligned}
\end{equation*}
which yields 
\begin{equation}  \label{eq:DUmat}
\begin{aligned} U'(\tau)&=\sum_{\kappa\geq 0}\left(K^T(t_{\kappa}-\tau)-K_0^T\right)W\Delta
K(t_{\kappa}), \end{aligned}
\end{equation}
where 
\begin{equation}  \label{eq:delk}
\Delta K(t)=K(t+0)-K(t-0), \quad t\in \mathbb{R}.
\end{equation}

We obtain for $\Delta U^{\prime }(\tau)=U^{\prime }(\tau+0)-U^{\prime
}(\tau-0) $, $\tau \in \mathbb{R}$ 
\begin{equation}  \label{eq:pDDU}
\Delta U^{\prime }(\tau)=-\sum_{\kappa\geq 0}\Delta
K^T(t_{\kappa}-\tau)W\Delta K(t_{\kappa}).
\end{equation}

From the definition of $\Delta K(t)$, we have that at least one of the terms
of the previous sum is not zero only when $t_{\kappa}-\tau \in \mathscr{I_K}$
for some $\kappa \in \mathbb{N}_0$. Otherwise, $\Delta U^{\prime }(\tau)=0$.
We define the set of values of $\bar{\tau} \in [a,b]$ such that for at least
one $\kappa \in \mathbb{N}_0$, $t_{\kappa}-\bar{\tau} \in \mathscr{I_K}$ as 
\begin{equation}  \label{eq:taub}
\mathscr{I_U}_{[a,b]}=\{\bar{\tau}\}_{\bar{\tau} \in [a,b]}.
\end{equation}

Defining the variable $t_q=t_{\kappa}-\bar{\tau}$, and 
\begin{equation*}
q(\bar{\tau})=\left\{q\in \mathbb{N} \mbox{ }|\mbox{ } t_q=%
\begin{cases}
\bar{\tau}, & \bar{\tau}\geq 0, \\ 
0, & \bar{\tau}< 0%
\end{cases}
\right\},
\end{equation*}%
\newline
we obtain 
\begin{equation*}
\Delta U^{\prime }(\bar{\tau})=-\sum_{q\geq q(-\bar{\tau})}\Delta
K^T(t_q)W\Delta K(t_q+\bar{\tau}).
\end{equation*}

If $\bar{\tau} \geq 0$, then $q(-\bar{\tau})=0$ and 
\begin{equation}  \label{eq:DDU}
\Delta U^{\prime }(\bar{\tau})=-\sum_{q\geq 0}\Delta K^T(t_q)W\Delta K(t_q+%
\bar{\tau}),
\end{equation}
otherwise, we can write 
\begin{equation*}
\begin{aligned} \Delta U'(\bar{\tau})=&-\sum_{q\geq 0}\Delta K^T(t_q)W\Delta
K(t_q+\bar{\tau})\\&+\sum_{0\leq q <q(-\bar{\tau})}\Delta K^T(t_q)W\Delta
K(t_q+\bar{\tau}). \end{aligned}
\end{equation*}
As $t_q+\bar{\tau}<0$, for $0\leq q <q(-\bar{\tau})$; then, $\Delta K(t_q+%
\bar{\tau})=0$, and (\ref{eq:DDU}) holds.

Given the two definitions of $\Delta U^{\prime }(\tau)$ that we have found
in (\ref{eq:pDDU}) and (\ref{eq:DDU}), it is straightforward to verify that
property (\ref{eq:symU}) is satisfied in the case $t_q-\tau \in \mathscr{I_K}
$, for at least one $q \in \mathbb{N}$. Otherwise, $t_q-\tau \notin %
\mathscr{I_K}$, $t_q+\tau \notin \mathscr{I_K}$ for all $q \in \mathbb{N}$,
and $\Delta U^{\prime }(\tau)=0=\left(\Delta U^{\prime }(-\tau)\right)^T$. \\

 \textit{Dynamic property.} From the definition of the fundamental matrix in (\ref{eq:con2a}), we have
\begin{equation*}
K(t)=%
\begin{cases}
\sum_{j=1}^m K(t-h_j)A_j, & t\geq 0 \\ 
K_0, & t<0,%
\end{cases}%
\end{equation*}%
\newline
therefore, for $t>0$, (\ref{eq:delk}) yields 
\begin{equation*}
\begin{aligned} \Delta K(t)&=
\sum_{j=1}^m\left(K(t+0-h_j)-K(t-0-h_j)\right)A_j,\\ &=\sum_{j=1}^m\Delta
K(t-h_j)A_j, \end{aligned}
\end{equation*}
and for $t=0$, $\Delta K(0)=I$.
Summarizing these results, we have 
\begin{equation}  \label{eq:DK}
\Delta K(t)=%
\begin{cases}
\sum_{j=1}^m \Delta K(t-h_j)A_j, & t> 0 \\ 
 
I, & t=0, \\ 

0, & t <0.%
\end{cases}%
\end{equation}

Considering the definition of $\Delta U^{\prime }(\tau)$ in (\ref{eq:DDU})
we can write 
\begin{equation*}
\begin{aligned} \Delta U'(\tau) &= - \sum_{\kappa \geq 0} \Delta
K^T(t_{\kappa})W\Delta K(t_{\kappa}+\tau).\\ \end{aligned}
\end{equation*}

In view of the dynamics of $\Delta K(t)$ given in (\ref{eq:DK}), it follows
that for $\tau > 0$, 
\begin{equation}  \label{eq:dynpos}
\begin{aligned} \Delta U'(\tau) &= - \sum_{\kappa \geq 0} \Delta
K^T(t_{\kappa})W \sum_{j=1}^m \Delta K (t_{\kappa}+\tau-h_j)A_j,& \\ &=
-\sum_{j=1}^m \sum_{\kappa \geq 0} \Delta K^T(t_{\kappa})W \Delta K
(t_{\kappa}+\tau-h_j)A_j,&\\ &= \sum_{j=1}^m \Delta U'(\tau-h_j)A_j,
\end{aligned}
\end{equation}
and (\ref{eq:DYN1}) is proved for $\tau>0$.
Consider $\tau<0$, so that $-\tau>0$, and $\Delta U^{\prime }(-\tau)$
satisfies equation (\ref{eq:dynpos}) as follows 
\begin{equation*}
\Delta U^{\prime }(-\tau)=\sum_{j=1}^m\Delta U^{\prime }(-\tau-h_j)A_j,
\quad \tau <0,
\end{equation*}
as the matrix $\Delta U^{\prime }(-\tau)$ satisfies the symmetry property (%
\ref{eq:symU}), we can apply it on both sides and obtain 
\begin{equation*}
\left(\Delta U^{\prime }(\tau)\right)^T=\sum_{j=1}^m\left(\Delta U^{\prime
}(\tau+h_j))\right)^TA_j, \quad \tau <0.
\end{equation*}
Transposition proves (\ref{eq:DYN1}) for $\tau<0$. 

\textit{\ Generalized algebraic property.} We will consider both terms on the l.h.s. of equation (\ref{eq:alg1}%
) separately. For $\Delta U^{\prime }(\tau)$, $\tau \geq 0$, defined by
equation (\ref{eq:DDU}), we can write 
\begin{equation*}
\begin{aligned} \Delta U'(\tau)=&-\sum_{\kappa > 0} \Delta K^T(t_{\kappa})W
\Delta K (t_{\kappa}+\tau)\\&-\Delta K^T(0)W\Delta K(\tau). \end{aligned}
\end{equation*}
Using the dynamics of $\Delta K(t)$ described in (\ref{eq:DK}), we
get
\begin{equation*}
\begin{aligned} \Delta U'(\tau)=&-\sum_{\kappa > 0}\sum_{i=1}^m A_i^T\Delta
K^T(t_{\kappa}-h_i)W\\&\times \sum_{j=1}^m\Delta K
(t_{\kappa}+\tau-h_j)A_j-W\Delta K(\tau), \end{aligned}
\end{equation*}
which is equal to 
\begin{equation}  \label{eq:apro1}
\begin{aligned} \Delta U'(\tau)=&-\sum_{\kappa > 0}\sum_{i=1}^m\sum_{j=1}^m
A_i^T\Delta K^T(t_{\kappa}-h_i)W\\ &\times \Delta K
(t_{\kappa}+\tau-h_j)A_j-W\Delta K(\tau). \end{aligned}
\end{equation}
Now, for $\sum_{i=1}^m\sum_{j=1}^m A_i^T\Delta U^{\prime }(\tau+h_i-h_j)A_j$%
, consider again equation (\ref{eq:DDU}) 
\begin{equation*}
\begin{aligned} \Delta U'(\tau+h_i-h_j)=&-\sum_{\kappa \geq 0}\Delta
K^T(t_{\kappa})W\\ &\times \Delta K (t_{\kappa}+\tau+h_i-h_j), \end{aligned}
\end{equation*}
the change of variable $t_q=t_{\kappa}+h_i$ allows us to write this equation
as 
\begin{equation*}
\begin{aligned} \Delta U'(\tau+h_i-h_j)&=&-\sum_{q \geq q(h_i)}\Delta
K^T(t_q-h_i)W\\ &&\times \Delta K (t_q+\tau-h_j)\\ &=&-\sum_{q \geq 0}\Delta
K^T(t_q-h_i)W\\&&\times \Delta K (t_q+\tau-h_j)\\&&+\sum_{0\leq
q<q(h_i)}\Delta K^T(t_q-h_i)W \\ &&\times \Delta K (t_q+\tau-h_j).
\end{aligned}
\end{equation*}
The last term is canceled given that $t_q-h_i<0$ for $0\leq q <q(h_i)$, and $%
\Delta K(\theta)=0$ for $\theta<0$. Finally, we arrive at 
\begin{equation}  \label{eq:DDUboth}
\begin{aligned} \Delta U'(\tau+h_i-h_j)=&-\sum_{q \geq 0}\Delta
K^T(t_q-h_i)W\\&\times \Delta K (t_q+\tau-h_j). \end{aligned}
\end{equation}
As a consequence, 
\begin{equation*}
\begin{aligned} &\sum_{i=1}^m\sum_{j=1}^m A_i^T\Delta
U'(\tau+h_i-h_j)A_j=\\=&-\sum_{q > 0}\sum_{i=1}^m\sum_{j=1}^m A_i^T\Delta
K^T(t_q-h_i)W \Delta K (t_q+\tau-h_j)A_j\\&-\sum_{i=1}^m\sum_{j=1}^m
A_i^T\Delta K^T(-h_i)W \Delta K (\tau-h_j)A_j \end{aligned}.
\end{equation*}
The last term is equal to zero as $\Delta
K(-h_i)=0 $, hence
\begin{equation}  \label{eq:apro2}
\begin{aligned} &\sum_{i=1}^m\sum_{j=1}^m A_i^T\Delta
U'(\tau+h_i-h_j)A_j=\\&= -\sum_{q > 0}\sum_{i=1}^m\sum_{j=1}^m A_i^T\Delta
K^T(t_q-h_i)W \Delta K (t_q+\tau-h_j)A_j, \end{aligned}
\end{equation}
Subtracting (\ref{eq:apro1}) from (\ref{eq:apro2}) proves (\ref{eq:alg1}).
\end{proof}

\section{Construction of the Lyapunov Matrix}\label{sec:con}
Any application of theoretical results requires an effective numerical procedure for constructing the matrices $U(\tau)$ and $\Delta U'(\tau)$. We present next the construction of matrix $U(\tau)$ that satisfies properties (\ref{eq:symU1})-(\ref{eq:DYN11}) and whose derivative's jump discontinuities satisfy (\ref{eq:symU})-(\ref{eq:alg1}).

\subsection{Single-Delay Case} \label{sec:onedel}

For the difference equation
\begin{equation} \label{eq:sysOD}
\begin{aligned}
x(t)&=Ax(t-H),\quad t\geq 0,\\
x(\theta)&= \varphi(\theta), \quad \theta \in [-H,0),
\end{aligned}
\end{equation}
with $x(t) \in  \mathbb{R}^n$, the fundamental matrix is given by
\begin{equation}\label{eq:fun1}
K(t)=K(t-H)A, \quad t\geq 0,
\end{equation}
with initial condition
\begin{equation}\label{eq:infunod}
K(\theta)=K_0=\left(A-I\right)^{-1}, \quad \theta \in [-H,0).
\end{equation}

%

For any positive definite matrix $W,$ the Lyapunov matrix (\ref{eq:U}) satisfies the properties (\ref{eq:symU1})-(\ref{eq:DYN11}). We consider the following equalities, given by the dynamic property (\ref{eq:DYN11}), for $\xi \in [0,H]$,
\begin{align*}
U(\xi)=&U(\xi-H)A,\\ 
U^T(H-\xi)=&A^TU^T(\xi).
\end{align*}
Applying the symmetry property (\ref{eq:symU1}) to the second equation we have
\begin{align}\label{eq:hpro39}
U(\xi)=&U(\xi-H)A,\\ 
U(\xi-H)=&A^TU(\xi)-\left(K_0^{-1}\right)^TP-\left(\xi I+HK_0^T\right)WK_0.\label{eq:hpro40}
\end{align}
where $P$ defined in (\ref{eq:defP}), is given by
\begin{align*}
P=HK_0^TWK_0^2-H(K_0^2)^TWK_0.
\end{align*}
We define the following variables, for $\xi \in [0,H]$,
\begin{equation}\label{eq:hyz}
\begin{aligned}
Y(\xi)&=U(\xi),\\
Z(\xi)&=U(\xi-H).
\end{aligned}
\end{equation}
By writing (\ref{eq:hpro39})-(\ref{eq:hpro40}) in terms of the variables defined in (\ref{eq:hyz}), the following system of equations is obtained
\begin{equation}\label{eq:hsyseq1}
\begin{aligned}
Y(\xi)-Z(\xi)A &=0\\
Z(\xi)- A^TY(\xi)&=-\left(K_0^{-1}\right)^TP-\left(\xi I+HK_0^T\right)WK_0.
\end{aligned}
\end{equation}

 The linear system (\ref{eq:hsyseq1}) is solved for matrices $Y(\xi)$ and $Z(\xi)$ by defining the vectors $y(\xi)=\mbox{vec}(Y(\xi))$ and $z(\xi)=\mbox{vec}(Z(\xi))$, and using Kronecker product properties: the system of equations (\ref{eq:hsyseq1}) is rewritten as
\begin{equation}\label{eq:hsyseq2}
\begin{aligned}
&\begin{bmatrix}
I\otimes I  &  -A^T\otimes I\\
-I\otimes A^T& I\otimes I
\end{bmatrix}\begin{bmatrix}
y(\xi)\\z(\xi)
\end{bmatrix}=\\&=\begin{bmatrix}
0_{n^2}\\ -\mbox{vec}(\left(K_0^{-1}\right)^TP+\left(\xi I+HK_0^T\right)WK_0)
\end{bmatrix}.
\end{aligned}
\end{equation}

This linear system has a unique solution (\ref{eq:hsyseq1}) if the matrix $I\otimes I-A^T\otimes A^T$
is invertible, that is, if none of the eigenvalues of $A$ lies on the unit circle of the complex plane.

Recalling the definitions in (\ref{eq:hyz}), we recover $u(\xi)=\mbox{vec}(U(\xi))$, $\xi \in [0,H]$, by devectorization of $y(\xi)$.

\begin{Examp}
In Fig. \ref{fig:hU2} we present the construction of $U(\tau)$, $\tau \in [-H,H]$ associated to a one-dimensional system, for a given positive definite matrix $W=I_2$, and a given parameter matrix $A=\begin{bmatrix}
-0.9375&1.11844\\0.3732&-1.3009
\end{bmatrix}$.
\newcommand{\mi}{\boldsymbol{-} \mathrel{\mkern -16mu} \boldsymbol{-}}

The entries of the matrix $U(\tau)$ appear as follows: 
$U_{11}(\tau)$ (\textcolor[rgb]{0.9,0.36,0.055}{$\mi$}), $U_{12}(\tau)$(\textcolor[rgb]{0.35,0.42,0.753}{\textbf{$\mi$}}), $U_{21}(\tau)$ (\textcolor[rgb]{0.945,0.58,0}{\textbf{$\mi$}}), and $U_{22}(\tau)$ (\textcolor[rgb]{0.65,0.25,0.69}{\textbf{$\mi$}}), in all the forthcoming figures.

\begin{figure}[H]
\centering
\includegraphics[height=35mm]{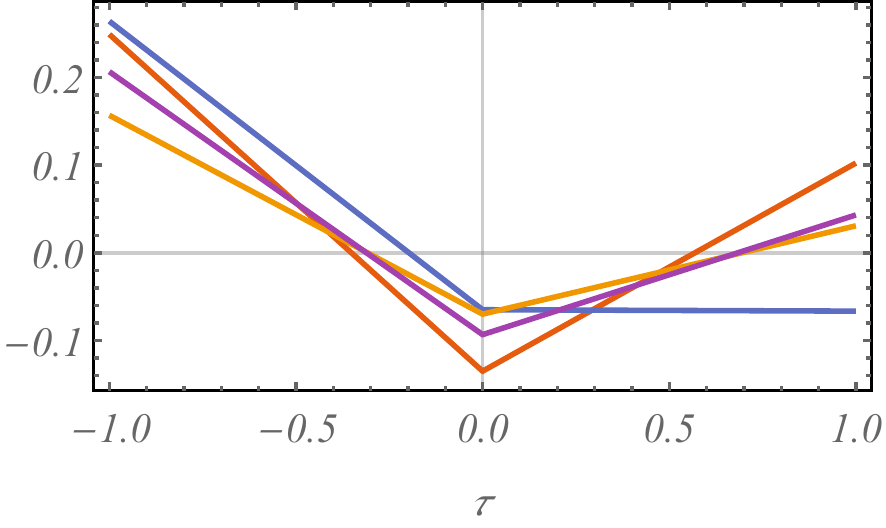}
\caption{Graph of $U(\tau)$, for a two-dimensional system of the form (\ref{eq:sysOD}). } \label{fig:hU2}
 \end{figure}

\end{Examp}

\subsection{Case of Multiple Commensurate Delays}\label{sec:muldel}

It is clear that the case of commensurate delays can be reduced to the one delay case, however, it seems important to address it as a multiple-delay system as a preparatory step to the case of multiple non commensurate delays.

For the case of difference equation in continuous time with multiple commensurate delays of the form
\begin{equation}\label{eq:mulcon}
\begin{aligned}
x(t)&=\sum_{j=1}^mA_jx(t-jh),\quad t\geq 0, \\
x(\theta)&= \varphi(\theta), \quad \theta \in [-mh,0),
\end{aligned}
\end{equation}
where $h$ is known as the basic delay, the fundamental matrix is given by
\begin{equation}\label{eq:fun10}
K(t)=\sum_{j=1}^mK(t-jh)A_j, \quad t\geq 0,
\end{equation}
with initial condition
\begin{equation}\label{eq:funinM}
K(\theta)=K_0=\left(\sum_{j=1}^mA_j-I\right)^{-1}, \quad \theta \in [-mh,0).
\end{equation}
We consider the following equalities, given by the dynamic  and the symmetry properties, (\ref{eq:DYN11}) and (\ref{eq:symU1}), respectively. For $\xi \in [0,h]$,
\begin{equation}\label{eq:dinamM}
\begin{aligned}
U(kh+\xi)=&\sum_{j=1}^mU\left((k-j)h+\xi\right)A_j, & k =0,\dots,m-1, \\ 
\end{aligned}
\end{equation}
\begin{equation}\label{eq:dinamM2}
\begin{aligned}
U(\xi-kh)=&\sum_{j=1}^mA_j^TU\left(\xi+(j-k)h\right)-\left(K_0^{-1}\right)^TP\\
&-\left((\xi-kh)I+\sum_{j=1}^{m}h_j A_j^TK_0^T\right)WK_0, \end{aligned}
\end{equation}
for $k =1,2,\dots,m.$, where $P$ is defined in (\ref{eq:defP}).
Let us define the auxiliary matrices, for $\xi \in [0,h]$
\begin{equation}\label{eq:aux1}
Y_k(\xi)=U(kh+\xi), \quad k\in\{-m, -m+1, \dots,0,\dots, m-1\}.
\end{equation}
In the new variables introduced in (\ref{eq:aux1}), the equations in (\ref{eq:dinamM}), for $k \in \{0,1,\dots,m-1\}$, are rewritten as
\begin{equation}\label{eq:sisteq10}
\begin{aligned}
Y_k(\xi)=\sum_{j=1}^m Y_{k-j}(\xi)A_j, \qquad k \in \{0,1,\dots,m-1\},
\end{aligned}
\end{equation}
and for $k =1,\dots,m$, the equations in (\ref{eq:dinamM2}) are
\begin{equation}\label{eq:sisteq12}
\begin{aligned}
Y_{-k}(\xi)=&\sum_{j=1}^m A_j^TY_{-k+j}(\xi)-\left(K_0^{-1}\right)^TP\\&-\left((\xi-kh)I+\sum_{j=1}^{m}h_j A_j^TK_0^T\right)\\&\times WK_0.
\end{aligned}
\end{equation}
Observe that (\ref{eq:sisteq10})-(\ref{eq:sisteq12}) is a system of $2m$ algebraic equations with $2m$ unknowns defined in (\ref{eq:aux1}). It can be rewritten in vector form using Kronecker products, and solved for the vectors $y_{k}(\xi)=\mbox{vec}(Y_{k}(\xi))$, $k \in \{-m,1-m,\dots,-1,0,1,\dots,m-1\}$, $\xi \in [0,h]$.\\

\begin{Cor}
If the system of equations (\ref{eq:sisteq10})-(\ref{eq:sisteq12}) admits a unique solution
\[
\left\{Y_{m-1}(\xi),Y_{m-2}(\xi),\dots,Y_0(\xi),\dots,Y_{-m}(\xi)\right\}, \quad \xi \in [0,h],
\]
then, there exists a unique Lyapunov matrix $U(\tau)$ associated to matrix $W.$ This matrix is defined on $[0,H]$ by 
\[
U(kh+\xi)=Y_k(\xi),\quad \xi \in [0,h], \quad k=0,1,\dots,m-1.
\]
\end{Cor}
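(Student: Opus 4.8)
The plan is to establish that the restriction of the genuine Lyapunov matrix to the fundamental band is exactly a solution of the algebraic system (\ref{eq:sisteq10})--(\ref{eq:sisteq12}), and then to let the uniqueness hypothesis identify this solution with $\{Y_k(\xi)\}$. Existence of a Lyapunov matrix $U(\tau)$ associated with $W$ is already secured by the well-definedness of the integral (\ref{eq:U}) under exponential stability, so the real content lies in the reconstruction formula and in the uniqueness claim.

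First I would recall that the genuine $U(\tau)$ obeys the dynamic property (\ref{eq:DYN11}) and the symmetry property (\ref{eq:symU1}); evaluated on shifted copies of $[0,h]$ these are precisely the relations (\ref{eq:dinamM}) and (\ref{eq:dinamM2}). Writing them in the auxiliary variables $Y_k(\xi)=U(kh+\xi)$ of (\ref{eq:aux1}), for $k\in\{-m,\dots,m-1\}$, which cover $U$ on the whole interval $[-H,H)$ because $H=mh$, they coincide with (\ref{eq:sisteq10}) for $k\in\{0,\dots,m-1\}$ and with (\ref{eq:sisteq12}) for $k\in\{1,\dots,m\}$. Hence the family $\{U(kh+\xi)\}_{k=-m}^{m-1}$ is a solution of the $2m\times 2m$ system.

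Since by hypothesis that system has a unique solution $\{Y_{m-1}(\xi),\dots,Y_{-m}(\xi)\}$, it must coincide with the restriction just exhibited, giving $U(kh+\xi)=Y_k(\xi)$ for all admissible $k$ and $\xi\in[0,h]$, in particular the stated identity on $[0,H]$. Uniqueness of the Lyapunov matrix follows the same way: any matrix function satisfying (\ref{eq:symU1})--(\ref{eq:DYN11}) produces, through the identical reduction, a solution of (\ref{eq:sisteq10})--(\ref{eq:sisteq12}), necessarily the unique $\{Y_k\}$; its values on $[0,H]$ are thereby fixed, and the dynamic property (\ref{eq:DYN11}) propagates them unambiguously to every $\tau\ge H$ while the symmetry property (\ref{eq:symU1}) determines $\tau<0$.

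The point I expect to require the most care is the equivalence between this finite algebraic system and the full functional characterization of $U$. The dynamic property is imposed only on the fundamental band through (\ref{eq:sisteq10}), so one must confirm that the recursion $U(\tau)=\sum_{j=1}^m U(\tau-jh)A_j$ consistently and uniquely extends the reconstructed values past $\tau=H$, and that the negative-argument equations (\ref{eq:sisteq12}) genuinely close the system by coupling the blocks $Y_{-k}$ to the positive-index blocks $Y_0,\dots,Y_{m-1}$. This closure, rather than any computation, is the crux: once it is in place, the assumed invertibility of the Kronecker coefficient matrix transfers the uniqueness of the algebraic solution directly to the matrix function $U(\tau)$.
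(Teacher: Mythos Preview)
The paper states this corollary without proof, so there is no argument to compare against. Your proposal is correct and is precisely the natural argument the paper's development invites: the derivation of (\ref{eq:sisteq10})--(\ref{eq:sisteq12}) in the text already shows that the restrictions $\{U(kh+\xi)\}_{k=-m}^{m-1}$ of any function obeying (\ref{eq:symU1})--(\ref{eq:DYN11}) solve the algebraic system, and the uniqueness hypothesis then pins them to $\{Y_k(\xi)\}$; the dynamic relation (\ref{eq:DYN11}) and the symmetry (\ref{eq:symU1}) extend the values beyond $[0,H]$ and to negative $\tau$ without ambiguity.

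One remark on the care you flag in your last paragraph: the existence half of the statement does rely on exponential stability, since the paper's only construction of a function satisfying (\ref{eq:symU1})--(\ref{eq:DYN11}) is the integral (\ref{eq:U}), whose convergence is secured by Lemma~2. The corollary as written does not repeat this hypothesis, but it is implicit throughout Section~3 and your appeal to it is appropriate. Without it, the algebraic system still has a unique solution by hypothesis and one can \emph{define} $U$ on $[0,H]$ by $Y_k$, but verifying that this piecewise object actually satisfies (\ref{eq:symU1})--(\ref{eq:DYN11}) globally (in particular, continuity at the knots $kh$ and consistency of the two branches of the symmetry relation) would need a separate check.
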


\begin{Examp}\label{ex:ex1} 
\emph{Two-delay system.} 
Consider the following continuous-time difference equation
\begin{equation}\label{eq:comm1}
\begin{aligned}
x(t)&=A_1x(t-1)+A_2x(t-3/2), & t\geq 0 \\\\
x(\theta)&= \varphi(\theta), & \theta \in [-3/2,0),
\end{aligned}
\end{equation}
where $x(t) \in \mathbb{R}^n$ and the basic delay $h$ is equal to $1/2$. 
We solve for $U(\tau)$, $\tau \in[-3/2,3/2]$, by finding the solution of the system of equations (\ref{eq:sisteq10})-(\ref{eq:sisteq12}), using Kronecker products.
Plots of $U(\tau)\in \mathbb{R}^2$, $\tau \in [-3/2,3/2]$, are shown in Fig. \ref{fig:f11} for $W=I_2$, and different values of $A_1$ and $A_2$. 
Fig \ref{fig:f11} (a): $A_1=\begin{bmatrix}
-0.4 & -0.3\\ 0.1 &0.15
\end{bmatrix}$ ; $A_2=\begin{bmatrix}
0.1 & 0.25\\ -0.9 & -0.1
\end{bmatrix}$,\\
Fig \ref{fig:f11} (b): $A_1=\begin{bmatrix}
1.1 & 0\\ -0.4 &0 
\end{bmatrix}$ ; $A_2=\begin{bmatrix}
0.25 & -0.125\\ -0.4 &-0.5 
\end{bmatrix}$.

\newcommand{\mia}{\boldsymbol{-} \mathrel{\mkern -20mu} \boldsymbol{-}}
\begin{figure}[H]
\centering
\subfigure[]{\includegraphics[height=35mm]{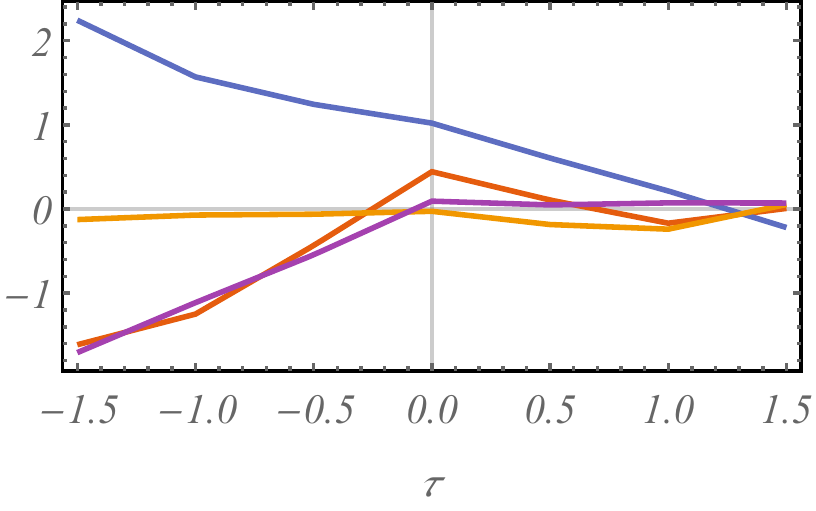}}\\
\subfigure[]{\includegraphics[height=35mm]{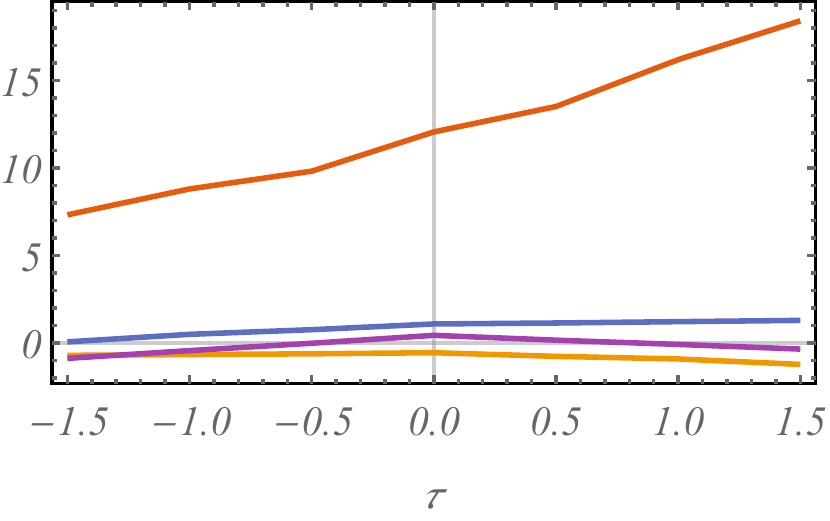}}\\
\caption{$U(\tau)$, $\tau \in [-3/2,3/2]$ related to the two-dimensional system of two commensurate delays (\ref{eq:comm1}). } \label{fig:f11}
 \end{figure}
\end{Examp}

\subsection{Case of Non-Commensurate Delays: Proposal of an Approximation }\label{sec:si}

In this section, we propose a strategy to approximate the matrix function $U(\tau)$ of a system of two non commensurate delays. We do so by computing the matrices $U_s(\tau)$ of a systems of commensurate delays parametrized by $s$ that tends to the non commensurate case. The intuition behind this approach is that a sufficiently good approximation obtained from the systems with commensurate delays is attainable, since it will become noticeable that the sequence of the matrix functions $U_s(\tau)$ tends to a continuous function.
%
%
Consider the system described by equation (\ref{eq:sys}) . We take the continued fraction representation (\cite{wall1948analytic}) of $h_i$, $i=\overline{1,m}$:
\begin{equation}\label{eq:cfe}
h_i=[\lambda_{i,0}; \lambda_{i,1}, \lambda_{i,2}, \dots]
\end{equation}

A rational approximation of $h_i$, $i=\overline{1,m}$ is given by a finite number of terms on the r.h.s. of equality (\ref{eq:cfe}), allowing us to find an approximation of $U(\tau)$ by using the strategy of the multiple commensurate delay case. The solution of the function $U_s(kh+\xi)$, $\xi \in [0,h]$, is obtained by solving the system of equations (\ref{eq:dinamM})-(\ref{eq:dinamM2}) for $k= 0,1,\dots,\frac{\bar{h}_m}{h}-1$,\\
with
$\bar{h}_i=[\lambda_{i,0};\lambda_{i,1},\lambda_{i,2},\cdots,\lambda_{i,s}], i=\overline{1,m}$\\
and 
$h=\gcd{(\bar{h}_1,\bar{h}_2,\dots,\bar{h}_m})$, as basic delay.\\ 

\begin{Examp}\label{ex:ex3} Consider the system described by
\begin{equation}\label{eq:cl11}
x(t)=\begin{bmatrix}
-0.4 & -0.3\\ 0.1+a & 0.15
\end{bmatrix}x(t-1)+\begin{bmatrix}
0.1 & 0.25\\ -0.9 & -0.1+b
\end{bmatrix}x(t-\sqrt{2}).
\end{equation}

The continued fraction representation of $\sqrt{2}$ is given by
\begin{equation}\label{eq:cfe2}
\sqrt{2}=[1; 2, 2, \dots]
\end{equation}
The scalar function $ U_s(\tau)$, $\tau \in [-\bar{h}_2,\bar{h}_2]$ is sketched for different values of $a$, $b$ and $s$. Fig. \ref{fig:fnc1} corresponds to $a=0.7$, $b=-1.1$ with (a) $s=1$ ( $\sqrt{2}$ is approximated by $\frac{3}{2}$), (b) $s=4$ ($\sqrt{2}$ is approximated by $\frac{41}{29}$). and (c)  $s=7$ ($\sqrt{2}$ is approximated by $\frac{577}{408}$). 
Fig. \ref{fig:fnc2} corresponds to $a=-1.6$, $b=-0.4$ with (a) $s=1$, (b)  $s=4$ and (c) $s=6$. 
%

\begin{figure}
\centering
\subfigure[]{\includegraphics[height=35mm]{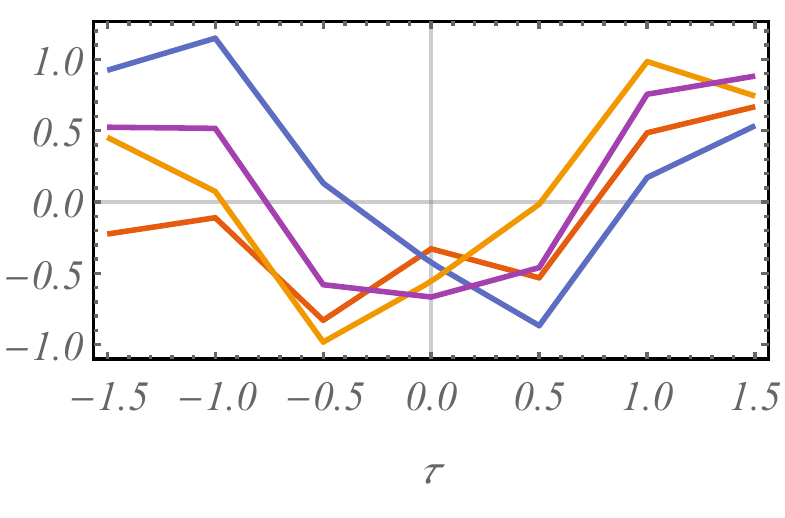}}\\
\subfigure[]{\includegraphics[height=35mm]{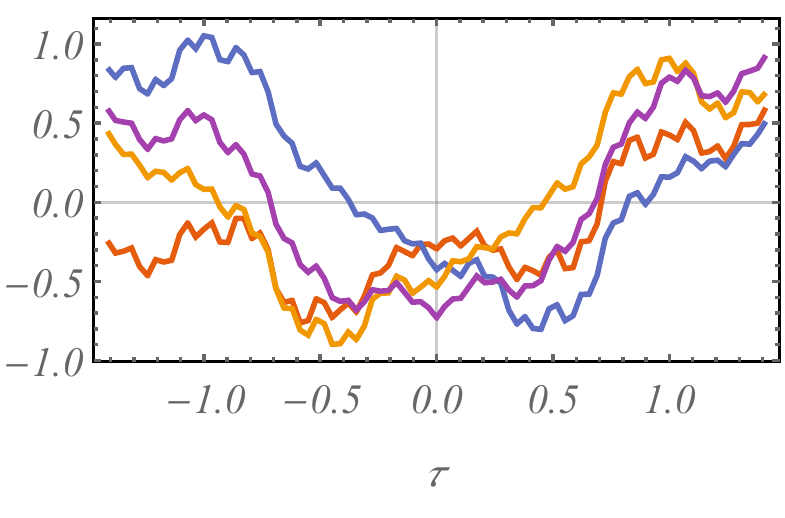}}\\
\subfigure[]{\includegraphics[height=35mm]{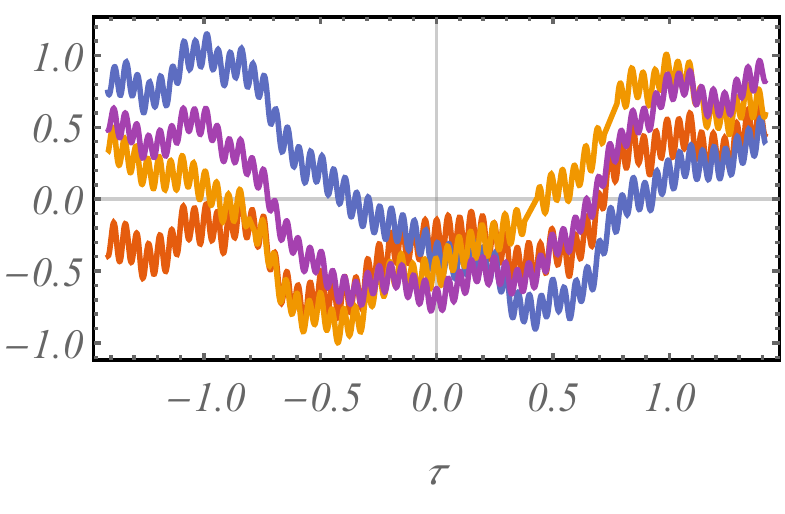}}\\
\caption{Approximation of $U(\tau)$, $\tau \in [-\bar{h}_2,\bar{h}_2]$ related to the two-dimensional system of two non commensurate delays (\ref{eq:cl11}), $a=0.7, b=-1.1$.} \label{fig:fnc1}
 \end{figure}
 
 \begin{figure}
\centering
\subfigure[]{\includegraphics[height=35mm]{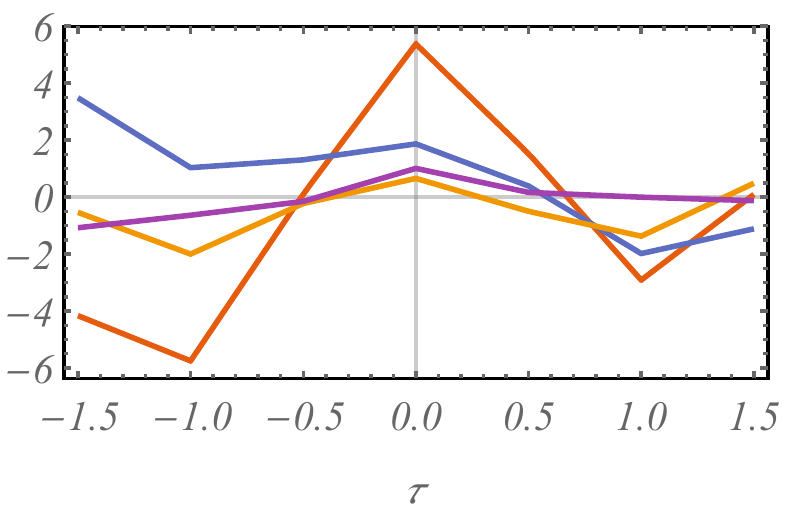}}\\
\subfigure[]{\includegraphics[height=35mm]{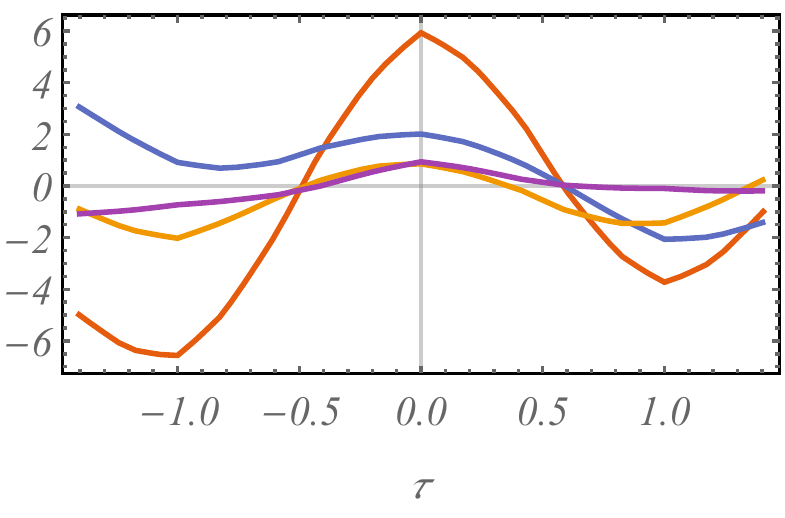}}\\
\subfigure[]{\includegraphics[height=35mm]{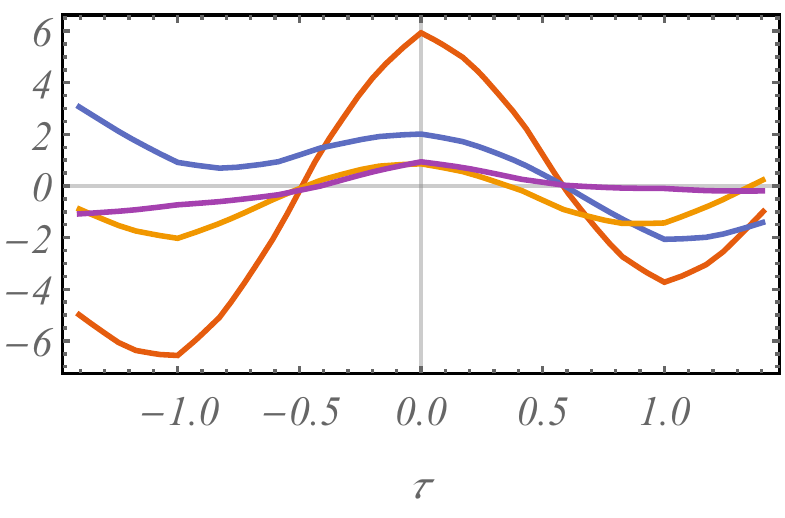}}\\
\caption{Approximation of $U(\tau)$, $\tau \in [-\bar{h}_2,\bar{h}_2]$ related to the two-dimensional system of two non commensurate delays (\ref{eq:cl11}), $a=-1.6, b=-1.4$.} \label{fig:fnc2}
 \end{figure}
\end{Examp}

\section{Conclusion}

The definition of the Lyapunov delay matrix of difference equations in
continuous time is introduced. Some of its properties and of the jump discontinuities of its derivative
are proved. These results allow the presentation of an analytic method for
  cases, and for its approximation in the non commensurate case. These results are key preliminary steps of our current
research on difference equations in continuous time, namely, the
construction of functionals with prescribed derivative, and the assessment of the system stability via the delay Lyapunov matrix $U(\tau )$, where the strategy introduced in \cite{emo15} is used.

\bibliography{bibliography/bibliography}             

\begin{thebibliography}{9}
\providecommand{\natexlab}[1]{#1}
\providecommand{\url}[1]{\texttt{#1}}
\providecommand{\urlprefix}{URL }
\expandafter\ifx\csname urlstyle\endcsname\relax
  \providecommand{\doi}[1]{doi:\discretionary{}{}{}#1}\else
  \providecommand{\doi}{doi:\discretionary{}{}{}\begingroup
  \urlstyle{rm}\Url}\fi

\bibitem[{Cooke and Krumme(1968)}]{Cooke1968}
Cooke, K.L. and Krumme, D.W. (1968).
\newblock {Differential-difference equations and nonlinear initial-boundary
  value problems for linear hyperbolic partial differential equations}.

\bibitem[{Damak(2015)}]{Serth107}
Damak, S. (2015).
\newblock \emph{{Approximation de syst{\`{e}}mes {\`{a}} param{\`{e}}tres
  r{\'{e}}partis : Analyse , simulation et commande}}.
\newblock Ph.D. thesis, L'institut national des sciences appliqu{\'{e}}es de
  Lyon.

\bibitem[{Egorov and Mondi\'e(2014)}]{emo15}
Egorov, A.V. and Mondi\'e, S. (2014).
\newblock Necessary stability conditions for linear delay systems.
\newblock \emph{Automatica}, 50(12), 3204 -- 3208.

\bibitem[{Fridman(2001)}]{Fridman20013}
Fridman, E. (2001).
\newblock {New Lyapunov-Krasovskii functionals for stability of linear retarded
  and neutral type systems}.
\newblock \emph{Systems {\&} Control Letters}, 43(4), 309 -- 319.

\bibitem[{Hale and Verduyn~Lunel(1993)}]{opac}
Hale, J.K. and Verduyn~Lunel, S.M. (1993).
\newblock \emph{Introduction to functional differential equations}.
\newblock Applied mathematical sciences. Springer-Verlag, New York, Berlin,
  Heidelberg.

\bibitem[{Kharitonov and Zhabko(2003)}]{Kharitonov200315}
Kharitonov, V. and Zhabko, A. (2003).
\newblock {Lyapunov-Krasovskii approach to the robust stability analysis of
  time-delay systems}.
\newblock \emph{Automatica}, 39(1), 15 -- 20.

\bibitem[{Kharitonov(2013)}]{khar1}
Kharitonov, V.L. (2013).
\newblock \emph{Time-Delay Systems}.
\newblock Control Engineering. Birkhäuser, Boston.

\bibitem[{Melchor-Aguilar(2013)}]{Melchor-Aguilar2013}
Melchor-Aguilar, D. (2013).
\newblock {Exponential stability of linear continuous time difference systems
  with multiple delays}.
\newblock \emph{Systems {\&} Control Letters}, 62(10), 811--818.
\newblock \doi{10.1016/j.amc.2013.03.051}.

\bibitem[{Wall(1948)}]{wall1948analytic}
Wall, H.S. (1948).
\newblock \emph{Analytic theory of continued fractions}.
\newblock van Nostrand.

\end{thebibliography}


\begin{thebibliography}{4}
\providecommand{\natexlab}[1]{#1}
\providecommand{\url}[1]{\texttt{#1}}
\providecommand{\urlprefix}{URL }
\expandafter\ifx\csname urlstyle\endcsname\relax
  \providecommand{\doi}[1]{doi:\discretionary{}{}{}#1}\else
  \providecommand{\doi}{doi:\discretionary{}{}{}\begingroup
  \urlstyle{rm}\Url}\fi

\bibitem[{Able(1956)}]{Abl:56}
Able, B. (1956).
\newblock Nucleic acid content of microscope.
\newblock \emph{Nature}, 135, 7--9.

\bibitem[{Able et~al.(1954)Able, Tagg, and Rush}]{AbTaRu:54}
Able, B., Tagg, R., and Rush, M. (1954).
\newblock Enzyme-catalyzed cellular transanimations.
\newblock In A.~Round (ed.), \emph{Advances in Enzymology}, volume~2, 125--247.
  Academic Press, New York, 3rd edition.

\bibitem[{Keohane(1958)}]{Keo:58}
Keohane, R. (1958).
\newblock \emph{Power and Interdependence: World Politics in Transitions}.
\newblock Little, Brown \& Co., Boston.

\bibitem[{Powers(1985)}]{Pow:85}
Powers, T. (1985).
\newblock Is there a way out?
\newblock \emph{Harpers}, 35--47.

\end{thebibliography}
                                                   







\end{document}